\title{Concrete Categories in Homotopy Type Theory}
\author{James Cranch}
\newcommand{\isom}{\cong}
\newcommand{\I}[1]{(\infty,#1)}
\newcommand{\tI}[1]{\texorpdfstring{$\I{#1}$}{(infty,#1)}}
\newcommand{\catname}[1]{\mathrm{#1}}
\newcommand{\Type}{\catname{Type}}
\newcommand{\ap}{\operatorname{ap}}
\newcommand{\transport}{\operatorname{transport}}
\newcommand{\refl}{\operatorname{refl}}
\newcommand{\arr}{\operatorname{arr}}
\newcommand{\obj}{\operatorname{obj}}
\newcommand{\id}{\operatorname{id}}
\newcommand{\inl}{\operatorname{inl}}
\newcommand{\inr}{\operatorname{inr}}
\newcommand{\ident}{\operatorname{ident}}
\newcommand{\cmp}{\operatorname{cmp}}
\newcommand{\conf}{\operatorname{conf}}
\newcommand{\Fun}{\operatorname{Fun}}
\newcommand{\Fin}{\operatorname{Fin}}
\newcommand{\Ord}{\operatorname{Ord}}
\newcommand{\bN}{\mathbb{N}}
\newcommand{\bNt}{\bN_{-2}}
\newcommand{\nil}{\operatorname{nil}}
\newcommand{\cons}{\operatorname{cons}}
\newcommand{\unit}{\operatorname{unit}}
\newcommand{\assoc}{\operatorname{assoc}}
\newcommand{\timeso}[1]{\mathop{\times}_{#1}}
\newcommand{\cC}{\mathcal{C}}
\renewcommand{\cD}{\mathcal{D}}
\newcommand{\isequiv}{\operatorname{is-equiv}}
\newcommand{\hfibre}{\operatorname{hfibre}}
\def\pb#1{\save[]+<12 pt,0 pt>:a(#1)\ar@{pb{}}[]\restore}
\theoremstyle{plain}
\newtheorem{thm}{Theorem}
\theoremstyle{remark}
\newtheorem{remark}[thm]{Remark}
\newtheorem{lemma}{Lemma}[thm]
\newtheorem{claim}{Claim}[thm]
\newenvironment{innerproof}[1]%
  {\begin{proof}[#1]
   }%
  {\end{proof}}
\begin{document}
\maketitle

\begin{abstract}
We introduce some classes of genuine higher categories in homotopy
type theory, defined as well-behaved subcategories of the category of
types. We give several examples, and some techniques for showing other
things are not examples. While only a small part of what is needed, it
is a natural construction, and may be instructive for people seeking
to provide a fully general construction.
\end{abstract}

\section{Introduction}

\subsection{Categories in homotopy type theory}

Homotopy type theory is a recently-developed foundational approach to
mathematics. The key idea is that a certain flavour of type theory can
be given a homotopical interpretation, in which a type is viewed as a
space, and a function is viewed as a continuous map. This
interpretation provides a rich semantics, and many internal
definitions can be made which harmoniously represent classical results
in homotopy theory as structural results about type theory. The basic
reference is the book\cite{HoTT-book}.

Inevitably, any foundational approach to mathematics will be judged in
some small part on its ability to comfortably represent category
theory, which has become an essential tool in organising modern
mathematics\cites{Borc,MacL}.

Thus far the author knows of one attempt, by Ahrens, Kapulkin and
Shulman\cite{AhKaSh}\cite{HoTT-book}*{Chapter 9}, to undertake
category theory in homotopy type theory. That attempts deals with
$1$-categories, rather than $\I1$-categories. In other words, the type
of homomorphisms between any two objects is homotopically
discrete. (Just as ordinary homotopy theory contains a theory of sets,
which can be represented as discrete spaces, homotopy type theory
contains a more classical type theory within, made up of those types
satisfying a similar kind of discreteness condition).

However, homotopy type theory studies types which are more general
than sets. Usually maps between structures built from such types
cannot be expected to be discrete.

Indeed, since homotopy type theory uses $\I0$-categories to model
types; it is natural to feel that $\I1$-categories are the most
appropriate concept of category in this setting, just as
Joyal\cite{Joy} and Lurie\cite{Lur} have provided ample evidence that
they are often tractable and useful in topology.

The purpose of this paper is to produce some genuine $\I1$-categories
in homotopy type theory. Our approach is certainly not fully general,
but our examples encompass a range of $\I1$-categories that one might
wish to work with. We also attempt to sketch some perceptions of the
limitations of our approach.

Also available is a library of code\cite{JDC-HoTT} written in the
dependently-typed programming language Agda\cite{Agda}, demonstrating
these concepts in practice; this is based on a homotopy type theory
library provided by Brunerie and coworkers\cite{Brun-HoTT}. At
appropriate points in what follows, we reference this library.

\subsection{Conventions}

This paper is written in an informal form of type theory, roughly as
used in the book \cite{HoTT-book}. We do not emphasise universes; the
reader who cares can identify appropriate universe levels for
themselves.

We write $x\equiv_Ay$ for the path type between two elements $x,y:A$
(we avoid using the phrase \emph{identity type}, saving the word
\emph{identity} for use in its categorical sense) and write simply
$x\equiv y$ if $A$ is obvious from context.

We use the dot $\cdot$ to denote composition of paths.

We call a $(-1)$-truncated type a \emph{proposition}; the book
\cite{HoTT-book} calls these \emph{mere propositions}, but we have no
use for any other meaning of the word and do not wish to sound
demeaning about them. Similarly, a $(-2)$-truncated type is called a
\emph{set}.

We use the phrase \emph{subcategory} in a very vague sense: we mean
the domain of a certain sort of functor. The functor in question is
always required to satisfy some kind of faithfulness condition (which
we will make clear as required), but never any kind of injectivity, or
essential injectivity, on objects. We feel that this is less uncommon
than it sounds: most practical uses of subcategories in mathematics
are similar.

\section{Inbuilt coherence: the category of types}

There are grave problems associated with naive attempts to define
categories, or higher categories, in homotopy type theory: there is an
infinite amount of data required, of a type which increases
progressively in complexity.

We may start with a set of objects $\obj:\Type$, and a dependent type
of morphisms $\hom:\obj\rightarrow\obj\rightarrow\Type$. This lacks
the basic structure of identities and composition, so we require
elements as follows:
$$\id:(x:\obj)\rightarrow\hom(x,x)$$
and
$$\cmp:(x,y,z:\obj)\rightarrow\hom(y,z)\rightarrow\hom(x,y)
  \rightarrow\hom(x,z).$$

This lacks the associative and unit laws of composition, so we must
add something (for example) whose content is that 
$$\cmp(\cmp(f,g),h) = \cmp(f,\cmp(g,h))$$
for all composable strings of morphisms.

However, in homotopy type theory, this does not assert that those two
are indistinguishable, merely that they are homotopic. As it happens,
we can form two different chains of composites of such homotopies
showing that
$$\cmp(\cmp(\cmp(f,g),h),k) = \cmp(f,\cmp(g,\cmp(h,k))).$$ We must
assert that these are equal, but this does not make contractible the
space of composites of five maps. Things continue becoming more
complex in this way. While appropriate structures in traditional
foundations are well-known\cite{Lein}, the problem of specifying the
resulting data in homotopy type theory is unsolved at the time of
writing.

The starting point of our work is the observation that, while nobody
has written down a general definition of $\I1$-categories in homotopy
type theory, there is one fully coherent example built in. That
category is the category $\Type$ of types, and functions between them.

In Agda, for example, we define the composition of two functions by
the usual formula
$$(g\circ f)(x) = g(f(x)).$$

But the result is that Agda normalises both $(h\circ g)\circ f$ and
$h\circ(g\circ f)$ to
$$\lambda x.h(g(f(x))),$$
and, as a result, the associativity of composition $(h\circ g)\circ
f\equiv h\circ(h\circ f)$ is a triviality.

The same goes for higher associativity laws. For example, consider the
``associativity pentagon'':
\begin{displaymath}
\xymatrix@C=0pt{
&(f\circ(g\circ h))\circ k\ar[rr]^{\equiv}&&f\circ((g\circ h)\circ k)\ar[dr]^{\equiv}&\\
((f\circ g)\circ h)\circ k\ar[ur]^{\equiv}\ar[ddrr]_{\equiv}&&&&f\circ(g\circ(h\circ k))\\
&&&&\\
&&(f\circ g)\circ(h\circ k)\ar[uurr]_{\equiv}&&}
\end{displaymath}
If we wish to verify that this commutes: that the two chains of
associativity identities connecting $((f\circ g)\circ h)\circ k$ and
$f\circ(g\circ(h\circ k))$ agree, then Agda can verify immediately
that both are simply reflexivity on
$$\lambda x.k(h(g(f(x)))),$$
and so are equal, by reflexivity of identity. This pattern continues:
all the structure of $\Type$ as a higher category is supplied in a
straightforward fashion by the underlying type theory.

\section{Inheriting coherence: $n$-concreteness}

As we have seen, the category $\Type$ has excellent properties within
homotopy type theory. Even so, it's only a single example of an
$(\infty,1)$-category, not a general approach to $(\infty,1)$-category
theory.

However, we can use it as a starting point for building more: for any
$n:\bNt$, we can define a notion of $\I1$-categories with a functor to
$\Type$, which is a $n$-truncated map on each homtype. We can call
these \emph{$(n+2)$-concrete $\I1$-categories}.

The coherence of the category $\Type$ automatically supplies the
desired coherence of our category in high degrees. However we must
explicitly supply the categorical structure in low degrees; this
structure is that of a fully weak $(n+1,1)$-category, so general
definitions require machinery and work which we are unwilling to
undertake here (see Leinster's book\cite{Lein} for a survey of
approaches).

The situation is perhaps best made clear by example; we talk through
the cases $n=0,1,2,3$ below: there are uncontroversial definitions of
categories and bicategories.

\subsection{$0$-concrete \tI1-categories}

The notion of $0$-concrete $\I1$-category is the notion of a full
subcategory of $\Type$.

We could specify such a thing simply by giving a type of objects $\obj$
and a realisation map $\obj^+:\obj\rightarrow\Type$, and define
$$\hom(x,y) = \left(\obj^+(x)\rightarrow\obj^+(y)\right).$$

More elaborately, one could give a type of objects and a realisation
map as above, and choose a map
$$\hom:\obj\rightarrow\obj\rightarrow\Type$$
together with, for each $x$ and $y$, an equivalence
$$\conf(x,y):\hom(x,y)\stackrel{\sim}{\rightarrow}\hom'(x,y),$$
the ``conformity map'' where $\hom'(x,y) = \left(\obj^+(x) \rightarrow
\obj^+(y)\right)$ as used before. Clearly these are the equivalent
concepts, and it is a matter of convenience which we choose to use; we
shall use the latter in what follows, since it is more similar to our
other definitions.

While we will give some useful examples below, our interest in the
notion of $0$-concrete $\I1$-categories is largely due to their status
as the simplest of the family of $n$-concrete $\I1$-categories.

Here is a result that gives some idea of the limitation of this
concept:
\begin{thm}
Consider the disjoint union $*\sqcup *$ of two copies of the terminal
category. This is not a $0$-concrete $\I1$-category.
\end{thm}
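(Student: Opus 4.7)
The plan is to unpack the definition of $0$-concreteness down to an elementary statement about empty function types, and then derive a direct contradiction.

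First I would observe that if $*\sqcup *$ were a $0$-concrete $\I1$-category, we would obtain a type of objects $\obj$ containing two elements $a,b:\obj$ and a realisation $\obj^+:\obj\rightarrow\Type$ such that, via the conformity equivalence, both $\obj^+(a)\rightarrow\obj^+(b)$ and $\obj^+(b)\rightarrow\obj^+(a)$ are equivalent to the empty type. Writing $A=\obj^+(a)$ and $B=\obj^+(b)$, the task reduces to showing that no pair of types $A,B$ admits both $A\rightarrow B$ and $B\rightarrow A$ empty.

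The core argument is then elementary. Suppose $A\rightarrow B$ is empty. Then $B$ must itself be empty, for given any $y:B$ the constant map $\lambda x.y$ would inhabit $A\rightarrow B$. Dually, $A$ must be non-empty, for were it empty the empty function would inhabit $A\rightarrow B$. Applying the same reasoning with $A$ and $B$ swapped, the emptiness of $B\rightarrow A$ forces $A$ empty and $B$ non-empty. These two conclusions directly contradict each other, so no such pair $A,B$ can exist.

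There is no serious obstacle here, which is rather the point: the result is a sanity check exposing the weakness of $0$-concreteness. The only mild subtlety is specifying what it means for $*\sqcup*$ to be $0$-concrete at all, since the paper has not formally introduced a notion of equivalence of $\I1$-categories or a type-theoretic construction of the coproduct of categories. However, the argument only uses the presence within $\obj$ of two elements whose mutual homs are empty, so any reasonable interpretation of the statement suffices to carry it through.
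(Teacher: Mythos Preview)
Your argument is correct and is essentially the paper's own proof: from $\neg(A\to B)$ you derive $\neg B$ (via the constant map) and $\neg\neg A$ (via the empty function), then swap roles to get $\neg A$, yielding the contradiction. The paper makes more explicit that ``non-empty'' here must be read constructively as $\neg\neg A$ rather than as inhabited, since the classical case split on whether $A$ has a point is unavailable---but your justification (``were it empty\ldots'') already delivers exactly $\neg\neg A$, so the argument goes through as written.
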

\begin{proof}
In fact, it's not a full subcategory of the category of spaces under
ordinary foundations. Suppose the two spaces representing the two
objects are $X$ and $Y$. Now, if $X$ has a point $x$, then there is a
constant map from $Y$ to $X$ with image $x$. However, if $X$ is empty,
then there is an inclusion map from $X$ to $Y$. Either way, there is
some map between them.

This argument does not work as stated in homotopy type theory, since
it uses the law of excluded middle to argue that $X$ must either be
empty or have a point. However, we can use double negation to recover
something similar: for types $X$ and $Y$, if $\neg(X\rightarrow Y)$,
we can show that $\neg(\neg X)$ and $\neg Y$.

We get
$$\neg(X\rightarrow Y)\rightarrow\neg Y$$
which of course means
$$((X\rightarrow Y)\rightarrow\bot)\rightarrow(Y\rightarrow\bot)$$
by composing with the constant map $Y\rightarrow (X\rightarrow Y)$.

And we get
$$\neg(X\rightarrow Y)\rightarrow\neg(\neg X)$$
from the inclusion $\bot\rightarrow Y$.

Hence, if we have two objects in a $0$-concrete $\I1$-category, then
we cannot simultaneously have $\neg(\obj^+(x)\rightarrow\obj^+(y))$
and $\neg(\obj^+(y)\rightarrow\obj^+(x))$: the former implies
$\neg\neg\obj^+(x)$ and the latter implies $\neg\obj^+(x)$, a
contradiction.
\end{proof}

\subsection{$1$-concrete \tI1-categories}

A $1$-concrete $\I1$-category is a subcategory of $\Type$ where, as
morphisms, we choose some connected components of the homtypes in
$\Type$. We cannot do this freely: we must choose the connected
components of the identity morphisms, and given any two morphisms we
have chosen, we must also choose the connected component of their
composite.

More formally, it consists of:
\begin{itemize}
\item A type $\obj$ of objects;
\item An object realisation map $\obj^+:\obj\rightarrow\Type$;
\item For every pair $x,y:\obj$, a type of homomorphisms $\hom(x,y)$;
\item For every pair $x,y:\obj$, a homomorphism realisation map
    $$\hom^+:\hom(x,y)\rightarrow\hom'(x,y),$$
  where $\hom'(x,y)=\left(\obj^+(x)\rightarrow\obj^+(y)\right)$;
\item For every pair $x,y:\obj$, an element $\conf(x,y)$ of the
  proposition that $\hom^+:\hom(x,y)\rightarrow\hom'(x,y)$ is
  $1$-truncated (the ``conformity'');
\item For every $x:\obj$, an element $\ident'(x)$ of the homotopy
  fibre of $\hom^+$ at the point $\id_{\obj^+(x)}$.
\item For every $x,y,z:\obj$, and $g:\hom(y,z)$ and $f:\hom(x,y)$, an
  element $\cmp'(g,f)$ of the homotopy fibre of $\hom^+$ at the point
  $\hom^+(g)\circ\hom^+(f)$.
\end{itemize}

This notion is already quite powerful, and using it we can comfortably
express many categories that we might choose to care about, as will be
seen in the next section.

\subsection{$2$-concrete \tI1-categories}

The next stage up, the $2$-concrete $\I1$-category, is a subcategory
of $\Type$ where we are allowed to choose a set of copies of each
morphism.

This requires still more data and axioms to be given by hand. We need
a choice of preimage of the identity maps, and of each composition,
much as before. But we now need to impose category axioms on this
structure: we need to impose the left and right unit axioms, and the
associativity axioms, to ensure that those choices of connected
components give genuine categories.

More formally, the structure consists of all the structure of a
$1$-concrete $\I1$-category, except that the conformity element
$\conf$ asserts that the maps $\hom^+$ are $0$-truncated, and elements
of the following types (for all $x,y,z,w:\obj$, $f:\hom(x,y)$,
$g:\hom(y,z)$ and $h:\hom(z,w)$ as appropriate):
\begin{align*}
\unit^l &: \cmp(\ident(y),f) \equiv f \\
\unit^r &: \cmp(f,\ident(x)) \equiv f \\
\assoc  &: \cmp(\cmp(h,g),f) \equiv \cmp(h,\cmp(g,f)).
\end{align*}
Here we define $\ident$ and $\cmp$ to be the first component of
  $\ident'$ and $\cmp'$ respectively, so that they have types
\begin{align*}
\ident &: (x : \obj) \rightarrow \hom(x,x)\\
\cmp   &: \hom(y,z) \rightarrow \hom(x,y) \rightarrow \hom(x,z).
\end{align*}

\subsection{$3$-concrete \tI1-categories and beyond}

By now, hopefully the pattern is becoming clear. A $3$-concrete
$\I1$-category will have a conformity type that is weaker still: it
only asserts that the maps $\hom^+$ are $1$-truncated.

This means that more structure should be supplied by hand: the
pentagon and triangle identities, familiar from the definition of a
bicategory (or a monoidal category) as in \cite{Borc}, need to be
imposed to ensure coherence of the unit and associativity laws.

In general, each time we increase the concreteness level, we need to
add more axioms simulating a weak $n$-category.

\section{Examples}

\subsection{\tI1-categories of types, sets, $n$-groupoids, etc}

The obvious examples of $0$-concrete $\I1$-categories simply consist of full
subcategories of the category of types on special sorts of types.

The trivial case is, of course, the $0$-concrete $\I1$-category of
types itself.

We could take as objects, instead, the $n$-truncated types for any
$n:\bNt$. For $n = 0, 1, \ldots$ we get the $\I1$-category of sets, or
of $1$-groupoids, and so on.

Another family of examples is what we get from using a singleton as
set of objects: this is a coherent version of the endomorphism monoid
of a type $X$, regarded as a $1$-object category.

We can produce the category of finite sets: there is a standard model
for nonempty finite ordered sets: we define $\Fin(n)$ for $n:\bN$ by
the constructors:
\begin{align*}
    0 &: \Fin(n+1)\\
    S &: \Fin(n) \rightarrow \Fin(n+1).
\end{align*}
This gives us a $0$-concrete $\I1$-category with $\obj=\bN$ and
$\obj^+ = \Fin$.

\subsection{The simplicial category $\Delta$}

The simplex category $\Delta$, the category of nonempty finite ordered
sets and order-preserving maps, fits into this scheme.

We also provide a convenient model $\Ord(0,0)$ for the ordered maps
from $\Fin(m)$ to $\Fin(n)$, with three constructors:
\begin{align*}
    0 &: \Ord(0,0)\\
    S^l &: \Ord(m,n+1)\rightarrow\Ord(m+1,n+1)\\
    S^r &: \Ord(m,n)\rightarrow\Ord(m,n+1).
\end{align*}
The semantics of $0$ are obvious; those of $S^l$ are defined by
\begin{align*}
    S^l(f)(0) &= 0,\\
    S^l(f)(i+1) &= f(i);
\end{align*}
and those of $S^r$ are defined by
$$S^r(f)(i) = f(i)+1;$$
This recursively defines a map
$$\Ord^+(m,n):\Ord(m,n)\longrightarrow(\Fin(m)\rightarrow\Fin(n))$$
for every $m$ and $n$.

It is straightforward to recursively define identities and
compositions for the type family $\Ord$, and also to show that these
coincide with the genuine identities and compositions under $\Ord^+$.

Moreover, standard methods permit one to show that $\Fin(n)$ and
$\Ord(m,n)$ are both sets.

\begin{thm}
The category $\Delta$ is a $1$-concrete $\I1$-category.
\end{thm}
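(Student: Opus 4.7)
The plan is to assemble each piece of the structure of a $1$-concrete $\I1$-category, taking $\obj := \bN$, $\obj^+ := \Fin$, $\hom(m,n) := \Ord(m,n)$, and $\hom^+ := \Ord^+$. Four items then remain to verify: conformity, the identity element $\ident'$, the composition element $\cmp'$, and the coherences tying $\ident$ and $\cmp$ to their semantic counterparts under $\Ord^+$.

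Conformity is essentially free. The preceding discussion has already noted that $\Fin(n)$ and $\Ord(m,n)$ are sets, so the function type $\Fin(m)\to\Fin(n)$ is also a set, and $\Ord^+$ is a map between sets. The homotopy fibres of such a map embed into a set and are therefore themselves sets, so $\Ord^+$ is $0$-truncated, and in particular $1$-truncated as required.

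For $\ident'(n)$ I would define $\ident:(n:\bN)\to\Ord(n,n)$ by recursion on $n$, setting $\ident(0) := 0$ and $\ident(n+1) := S^l(S^r(\ident(n)))$; unfolding the defining equations of $S^l$ and $S^r$ shows this sends $0$ to $0$ and $S(i)$ to $S(\ident(n)(i))$, and a second parallel induction then gives the path $\Ord^+(\ident(n)) \equiv \id_{\Fin(n)}$, producing the required homotopy fibre element.

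The main obstacle is the composition $\cmp'$. Given $g:\Ord(n,p)$ and $f:\Ord(m,n)$, I would define $\cmp(g,f):\Ord(m,p)$ by induction on $g$ with a subordinate case split on $f$ in the $S^l$ case: concretely, $\cmp(0,0) := 0$; $\cmp(S^r(g'),f) := S^r(\cmp(g',f))$; $\cmp(S^l(g'),S^r(f')) := \cmp(g',f')$; and $\cmp(S^l(g'),S^l(f')) := S^l(\cmp(S^l(g'),f'))$. Termination follows from a lexicographic induction on the combined sizes of $g$ and $f$. A matching induction, using the defining equations of $\Ord^+$ together with function extensionality, establishes $\Ord^+(\cmp(g,f)) \equiv \Ord^+(g)\circ\Ord^+(f)$; since the target of $\Ord^+$ is a set the path is already unique once exhibited, and the resulting pair constitutes the required element of the relevant homotopy fibre.
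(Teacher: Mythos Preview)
Your identity and composition constructions are fine and simply make explicit what the paper waves at as ``straightforward to recursively define''. There is a minor slip in your choice of $\obj^+$: the paper uses $\Fin\circ S$ rather than $\Fin$, so that the objects realise \emph{nonempty} finite ordinals; with your choice you are building the augmented simplex category $\Delta_+$ rather than $\Delta$ proper.

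The substantive gap is in conformity. You read the definition literally where it says the map $\hom^+$ should be ``$1$-truncated'', and then argue that a map between sets has set-valued fibres, hence is $0$-truncated, hence $1$-truncated. But the paper's ``$1$-truncated'' there is a typo for ``$(-1)$-truncated'': this is forced by the general formula (an $(n+2)$-concrete category has $n$-truncated $\hom^+$), by the informal description of $1$-concreteness as ``choosing some connected components of the homtypes'', and above all by the paper's own proof, which explicitly shows the fibres of $\Ord^+$ are \emph{propositions}. Your argument only shows they are sets, which is strictly weaker and does not establish $1$-concreteness in the intended sense.

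The paper closes this gap by proving that $\Ord^+(m,n)$ is an injection (using $0\neq S(i)$ in $\Fin$ and an induction over the constructors of $\Ord$), and then invoking the easy claim that an injection into a set has propositional homotopy fibres. You would need to supply this injectivity argument, or something equivalent, to complete the proof.
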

\begin{proof}
We use object set $\obj=\bN$, and the object realisation map
$\Fin\circ S$ (the suspension is so that we get only nonempty finite
ordered sets).

Then we use $\Ord$ to define $\hom$, and then $\hom^+$ is the
recursively-defined map $\Ord^+$ defined above.

Identities and composition have already been discussed. All that
remains is the conformity. We find it helpful to prove the following:
\begin{claim}
An injection into a set has propositions as homotopy fibres.
\end{claim}
\begin{innerproof}{Proof of Claim}
It is easy to show that any two elements of the homotopy fibre are
equal.
\end{innerproof}
We put this claim to work on the map $\Ord^+(m,n)$. The codomain is
the type of functions $\Fin(m)\rightarrow\Fin(n)$, which is a set
since $\Fin(n)$ is one. It is not hard to prove that $0\neq S(i)$ for
all $i:\Fin(n)$, and thence to show recursively that the map $\Ord^+$
is injective.
\end{proof}

\subsection{Ahrens-Kapulkin-Shulman 1-categories}

The work\cite{AhKaSh} of Ahrens, Kapulkin and Shulman provides
examples of our theory. We refer to the notion of category they
consider as \emph{AKS-categories}.

Generalising the preceding example somewhat, examples of their theory
give examples of our theory:
\begin{thm}
Any AKS-category yields a $2$-concrete $\I1$-category.
\end{thm}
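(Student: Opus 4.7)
The plan is to take the objects of the AKS-category $\cC$ as the type $\obj$ of objects, and to use a Yoneda-style embedding to provide the object realisation: I would set $\obj^+(x) := \sum_{y : \obj} \hom_{\cC}(y, x)$. Since an AKS-category has a $1$-type of objects (its identity types being equivalent to the sets of isomorphisms) and each $\hom_{\cC}(y, x)$ is a set, $\obj^+(x)$ is a $1$-type. I would then take $\hom(x, y) := \hom_{\cC}(x, y)$ and define $\hom^+(f)$ by post-composition, $\hom^+(f)(z, g) := (z, f \circ g)$.

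The first technical step is conformity. Since $\obj^+(y)$ is a $1$-type, the function type $\obj^+(x) \to \obj^+(y)$ is a $1$-type, so its path spaces are sets. Hence the fibre of $\hom^+$ at any $\phi$ is a sigma of the set $\hom(x,y)$ over path types in a $1$-type, i.e.\ a sigma of a set over sets, which is a set; so $\hom^+$ is $0$-truncated. (This generalises the slogan of the earlier claim: raising the codomain one truncation level allows the fibres to be automatically truncated.) For $\ident'(x)$ I would supply $(\ident_{\cC}(x), p)$, where $p$ comes from function extensionality applied to the pointwise left-unit law $\ident_{\cC}(x) \circ g \equiv g$ in $\cC$. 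For $\cmp'(g, f)$ I would supply $(g \circ f, q)$, where $q$ comes from pointwise associativity $(g \circ f) \circ h \equiv g \circ (f \circ h)$. The axioms $\unit^l$, $\unit^r$, and $\assoc$ required by the $2$-concrete structure are then verbatim the AKS unit and associativity axioms; since $\hom(x,y)$ is a set, any such equality is a proposition, so they transfer directly.

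The main subtlety is calibrating the truncation levels. The realisation $\obj^+$ must land in $1$-types, and no higher, so that conformity holds; the definition $\obj^+(x) = \sum_{y : \obj} \hom_{\cC}(y, x)$ is chosen to sit at exactly this level while still being faithful enough to distinguish morphisms, since $\hom^+(f)$ evaluated at $(x, \ident_{\cC}(x))$ yields $(x, f)$. A trivial choice like $\obj^+(x) = \ast$ would satisfy conformity but collapse all morphisms, whereas a universe-valued Yoneda embedding $\obj^+(x) = \lambda y.\,\hom_{\cC}(y, x)$ would push $\obj^+$ out of the $1$-types and break conformity. Beyond this balancing act, the remaining verifications are routine translations of the AKS data.
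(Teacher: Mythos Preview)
Your proof is correct and follows the paper's approach exactly: the same Yoneda-style $\obj^+(x)=\sum_{y}\hom_{\cC}(y,x)$, the same post-composition $\hom^+$, and the same conformity argument via the $1$-truncatedness of $\obj^+(x)\to\obj^+(y)$. One small aside: the trivial choice $\obj^+(x)=\ast$ would in fact also produce a valid $2$-concrete structure, since the definition imposes no faithfulness requirement on $\hom^+$; this does not affect your main argument, but your ``main subtlety'' paragraph slightly overstates what is forced.
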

\begin{proof}
Suppose we have such a category, with object type $\obj$ and morphism
types $\hom(x,y)$ for $x,y:\obj$.

We inherit the type of objects as is. The object realisation map
$\obj^+$ takes an object $x$ to the type of pairs consisting of an
object $y$ and an element $f:\hom(y,x)$.

Note that $\obj^+(x)$ is a $1$-truncated type. That is because it is a
$\Sigma$-type; $\obj$ is $1$-truncated (this is \cite{AhKaSh}*{Lemma
  3.8}) and homsets in an AKS-category are genuine sets: they're
$0$-truncated and hence $1$-truncated. Hence, also, for all $x$ and
$y$ the type of maps from $\obj^+(x)$ to $\obj^+(y)$ is $1$-truncated.

We define the realisation $\hom^+$ as follows:
$$\hom^+(f)(z,g) = (z,f\circ g).$$

It is straightforward to define units and composition using this
definition; the maps require the left unit and associativity axioms
respectively.

The problem that remains is conformity. Given a map $f$ from
$\obj^+(x)$ to $\obj^+(y)$, we must show that the homotopy fibre of
$f$ under $\hom^+$ is a set. A $\Sigma$-type is $n$-truncated if the
base and all fibres are $n$-truncated. In this case the base is a set
because one axiom of an AKS-category is that homomorphisms form sets,
and the fibre is a set because it's a path type of the $1$-truncated
type $\obj^+(x)\rightarrow\obj^+(y)$.
\end{proof}

\subsection{Types as \tI0-categories}

Given a type $X$, it is natural to wish to regard $X$ as an
$\infty$-groupoid, which is an $\I0$-category: a degenerate case of an
$\I1$-category where all morphisms (given by path types) are
equivalences.

We can do this:
\begin{thm}
A type $X$ can be given the structure of a $1$-concrete $\I1$-category.
\end{thm}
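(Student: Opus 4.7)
The plan is to view $X$ as an $\infty$-groupoid by assigning to each element its contractible singleton. Take $\obj:=X$ and define the object realisation
\[
  \obj^+(x) \;:=\; \sum_{y:X}(y\equiv x),
\]
which is contractible for every $x$. Set $\hom(x,y):=(x\equiv_X y)$, and define the morphism realisation by postcomposition:
\[
  \hom^+(p)(z,q) \;:=\; (z,\,q\cdot p).
\]

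The identity and composition data come directly from the path-groupoid structure on $X$. For $\ident'(x)$, take $\refl_x$ together with the witness that $\hom^+(\refl_x)$ is pointwise the identity, which follows from the right-unit law of concatenation and function extensionality. For $\cmp'(q,p)$ with $p:x\equiv y$ and $q:y\equiv z$, take $p\cdot q$ with the fibre witness obtained from the calculation $\hom^+(p\cdot q)(z,r)=(z,(r\cdot p)\cdot q)=\hom^+(q)(\hom^+(p)(z,r))$ together with associativity of concatenation and funext.

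The main obstacle will be conformity. The key observation is that $\obj^+(x)$ and $\obj^+(y)$ are both contractible, so the target $\obj^+(x)\to\obj^+(y)$ is contractible as well; consequently every homotopy fibre of $\hom^+$ is equivalent to $\hom(x,y)=(x\equiv_X y)$ itself. This reduces conformity to the standard fact that a map into a contractible type is $n$-truncated precisely when its domain is, so $\hom^+$ inherits the truncation required by the $1$-concrete definition directly from the path type; for more general $X$ one may instead replace $\hom(x,y)$ by its suitable truncation and lift $\ident$ and $\cmp$ through the corresponding universal property.
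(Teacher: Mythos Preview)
Your construction coincides with the paper's: $\obj=X$, $\obj^+(x)=\sum_{y:X}(y\equiv x)$, $\hom(x,y)=(x\equiv_X y)$, and $\hom^+(p)(z,q)=(z,q\cdot p)$; your treatment of $\ident'$ and $\cmp'$ via the unit and associativity of path concatenation is exactly what the paper has in mind when it says these are ``quick checks.''

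The trouble is in the conformity step, and it is a genuine gap. Your computation is correct: since $\obj^+(x)$ and $\obj^+(y)$ are both contractible, so is $\hom'(x,y)=\obj^+(x)\to\obj^+(y)$, and therefore every homotopy fibre of $\hom^+$ is equivalent to the whole domain $(x\equiv_X y)$. But conformity for a $1$-concrete $\I1$-category requires these fibres to be \emph{propositions} (compare the paper's introduction, where $k$-concreteness corresponds to a $(k-2)$-truncated map on hom-types, and the proofs for $\Delta$ and for free categories, which establish exactly that fibres are propositions). The path type $(x\equiv_X y)$ is a proposition only when $X$ is a set, so your argument as written establishes the result only for $0$-truncated $X$. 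Your final sentence, proposing to replace $\hom(x,y)$ by a truncation, is not a repair: it changes the category you are constructing and hence the statement being proved.

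The paper itself says only that conformity is ``rapidly proved by path induction'' and gives no further detail, so you have not strayed from its line of argument; but in spelling things out you have exposed that, with this choice of $\obj^+$, the fibre is all of $(x\equiv_X y)$, and no amount of path induction will make that a proposition for an arbitrary type $X$.
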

\begin{proof}
We naturally take $\obj = X$, and we choose $\obj^+x$ to be the type
of paths to $x$. As promised, we also choose $\hom(x,y) =
(x\equiv_Xy)$. The proper definition of $\hom^+$ is very much like
that used in the subsection above:
$$\hom^+(e)(z,\epsilon) = (z,\epsilon\cdot e).$$
The identity and composites are quick checks, and conformity is also
rapidly proved by path induction.
\end{proof}

\subsection{Automorphism groups and categories of $n$-truncated maps}

Given a map $f:X\rightarrow Y$ between two types, there is a
proposition expressing that $f$ is $n$-truncated (for any
$n:\bNt$). It is also true that identity maps are $n$-truncated (for
all $n$), and composites of $n$-truncated maps are $n$-truncated.

That gives that there are $1$-concrete $\I1$-categories of types and
$n$-truncated maps, or of any given type of types and $n$-truncated
maps between them.

One special case is when we take the object type to be a singleton and
$n=-2$: the resulting category is the one-object category of
self-equivalences of some given type: this $\I1$-category can be
regarded as the automorphism group of that type.

\subsection{Free categories}

One might reasonably wish to discuss the free category (on a specified
type of objects and specified types of morphisms between them).

So suppose given a type $\obj:\Type$ and a family
$\arr:\obj\rightarrow\obj\rightarrow\Type$.

We can define the homomorphisms inductively, as linked lists of
composable arrows:
\begin{align*}
 \nil &: \hom(x,x)\\
\cons &: \hom(y,z)\rightarrow\arr(x,y)\rightarrow\hom(x,z).
\end{align*}
As is normal for linked lists, there is a unital and associative
composition operation, which we denote by $\bullet$.

This structure fits into our framework:
\begin{thm}
A free category is a $1$-concrete $\I1$-category.
\end{thm}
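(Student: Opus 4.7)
The plan is to mirror the argument for AKS-categories, with the free category itself playing the role of the ambient category. I would take $\obj^+(x) := \Sigma(y:\obj).\,\hom(y,x)$, the type of morphisms into $x$ in the free category, and define $\hom^+(l)(z,g) := (z,\,l \bullet g)$, i.e.\ postcomposition. The identity witness $\ident'(x)$ is then $(\nil_x, -)$, with path component given by the left unit law for $\bullet$; the composition witness $\cmp'(l_1, l_2)$ is $(l_1 \bullet l_2, -)$, with path component given by associativity of $\bullet$. Both these laws for $\bullet$ come for free alongside its definition.

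The remaining task is conformity: $\hom^+$ must be a $1$-truncated map. Unlike the AKS case, we cannot simply bound the truncation level of the codomain $\obj^+(x) \to \obj^+(y)$, since $\obj$ and $\arr$ are entirely arbitrary. Instead, the key observation is that $\hom^+$ behaves like a Yoneda embedding: evaluating $\hom^+(l)$ at the canonical element $(x, \nil_x) : \obj^+(x)$ returns $(x, l \bullet \nil_x) = (x, l)$ by the right unit law, so $l$ is recoverable from $\hom^+(l)$. I would exploit this to show each fibre of $\hom^+$ is a proposition, which is more than enough.

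The main obstacle, as I see it, is making this recovery argument coherent at the path level. Given two fibre elements $(l_1, p_1)$ and $(l_2, p_2)$ lying over the same $f$, the homotopy $p_1 \cdot p_2^{-1}$ evaluated at $(x, \nil_x)$ and projected to the $\hom(-, y)$ component ought to yield a path $l_1 = l_2$, which must then be promoted to an identification of the whole pairs in the fibre. This requires patient manipulation of paths in $\Sigma$-types, function extensionality, and transport along $\ap_{\hom^+}$. A further subtlety is that non-trivial loops $x = x$ in $\obj$ may prevent the fibre from being strictly propositional, in which case one falls back on showing $1$-truncation directly from the recovered path; but in either case the conformity holds.
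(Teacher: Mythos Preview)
Your setup coincides with the paper's: the same $\obj^+(x)=\Sigma_{y}\hom(y,x)$, the same $\hom^+$ given by postcomposition (the paper writes the concatenation in the other order, which is only a notational choice), and the same appeal to the unit and associativity laws of $\bullet$ for $\ident'$ and $\cmp'$.

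For conformity the paper takes a slightly more packaged route than you do. Rather than unwinding the fibre directly, it asserts that for all $f,g$ the map $\ap(\hom^+)$ gives an equivalence $(f\equiv g)\simeq(\hom^+(f)\equiv\hom^+(g))$, and then invokes a general lemma: any function whose action on path types is everywhere an equivalence has propositional homotopy fibres (the standard characterisation of an embedding). Your evaluation-at-$(x,\nil_x)$ trick is precisely the candidate inverse to $\ap(\hom^+)$, so the two arguments have the same content; the paper simply abstracts the last step into a reusable lemma instead of manipulating the $\Sigma$-type of the fibre by hand.

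Your caveat about nontrivial loops $x\equiv x$ in $\obj$ is well placed, and in fact touches a point the paper glosses over. Take $\arr$ identically empty: then the inductive family $\hom$ degenerates to the identity type on $\obj$, each $\obj^+(x)$ is contractible, and the fibres of $\hom^+$ are equivalent to the hom-types $(x\equiv_\obj y)$ themselves. For $\obj$ with higher homotopy these are neither propositions nor $n$-truncated for any fixed $n$, so the paper's appeal to ``standard methods'' for the equivalence $(f\equiv g)\simeq(\hom^+(f)\equiv\hom^+(g))$ cannot succeed as stated, and your proposed fallback to showing $1$-truncation directly fails for the same reason. Modulo that shared gap, your proof and the paper's are the same argument.
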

\begin{proof}
We have $\obj$ and $\hom$ already. We define $\obj^+(x)$ to be the
type of ``homs to $x$'', in other words the $\Sigma$-type of pairs
consisting of an element $y:\obj$ and an element $f:\hom(y,x)$.

Then $\hom^+$ is defined by composition:
$$\hom^+(f)(x,g) = (x,g\bullet f).$$
Using this definition, $\ident'$ and $\cmp'$ are clear from the
algebraic properties of the composition.

What's left is the conformity. Standard methods, as in
\cite{HoTT-book}*{Section 2.12}, will prove that for any $x,y:\obj$
and for any $f,g:\hom(x,y)$, the type $\hom^+(f)\equiv\hom^+(g)$ is
equivalent to the type $f\equiv g$. This enables us to show that
$\hom^+$ is 1-truncated, via the following lemma:
\begin{lemma}
Suppose $k$ is any function such that, for any $x$, $y$, the map
$$\ap(k):(x\equiv y)\rightarrow (k(x)\equiv k(y))$$
is an equivalence. Then the homotopy fibre of $k$ at any point is a
proposition.
\end{lemma}
\begin{innerproof}{Proof of lemma}
We show that any two elements of the homotopy fibre are equal. This
can be simplified by using path induction to simplify one (but not
both) of the second components of the homotopy fibre to $\refl$.

So we aim to show that an element $(a,u)$ is equal to an element
$(b,\refl)$. Writing $e$ for the hypothesis that $\ap(k)$ is an
equivalence, we can show that $a\equiv b$ immediately from $e$ (as
$\pi_1(\pi_1(e(u)))$); we are left with the check
\begin{align*}
 &\transport(\lambda x\rightarrow f(x)\equiv f(b),\pi_1(\pi_1(e(u))),u)\\
=&\transport(\lambda x\rightarrow x\equiv f(b),\ap(f)(\pi_1(\pi_1(e(u)))),u)\\
=&!\ap(f)(\pi_1(\pi_1(e(u)))) \cdot u\\
=&!u \cdot u\\
=&\refl.\qedhere
\end{align*}
\end{innerproof}
This completes the proof.
\end{proof}

\section{Pointed types: a cautionary tale}
\label{ss:pointed-types}

Recall that the type of \emph{pointed types} is defined by
$$\Type^* = \Sigma(\Type,\id),$$
so that a pointed type $(X,x)$ consists of a type $X$ and an element
$x:X$ (the \emph{basepoint}).

Given two pointed types $(X,x)$ and $(Y,y)$, the type of \emph{pointed
  maps} between them is defined by
$$(X,x)\stackrel{*}{\rightarrow}(Y,y) = \Sigma(X\rightarrow Y, \lambda
f\rightarrow f(x)\equiv y).$$

This concept is ubiquitous in algebraic topology, and so one would
naturally want to form the category of pointed types and pointed maps
accordingly.

Unfortunately, the obvious approach doesn't work. This would be to
model it as a concrete $\I1$-category by forgetting the basepoint, so
taking $\obj^+(X,x) = X$ and $\hom^+(f,p) = f$. But then, consider
what happens when we take $X$ to be the singleton type $1$, and $*$ to
be its unique element, then the type of pointed maps
$$(1,*)\stackrel{*}{\rightarrow}(Y,y)$$
is contractible (since it is equivalent to the type of paths to $y$ in
$Y$), but the type $\hom'((1,*),(Y,y))$ is the type $1\rightarrow Y$,
which is equivalent to $Y$. The map $\hom^+$ is, under these
equivalences, the inclusion of $y$ into $Y$.

The problem is that this inclusion has homotopy fibre $y\equiv_Yy$,
the loop space of $Y$ at $y$, and this can only be expected to be
$n$-truncated if $Y$ is $(n+1)$-truncated (more precisely, if the
basepoint component of $Y$ is). Thus we cannot form an $n$-concrete
category of all pointed types by this process for any $n$.

One could wonder whether this was just an unfortunate choice of
$\obj^+$ and $\hom^+$, but one cannot do any better:
\begin{thm}
The category of pointed types is not an $n$-concrete category for any $n$.
\end{thm}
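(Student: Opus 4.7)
I would argue by contradiction, using the zero-object structure of $\Type^*$ together with an iterated loop-space analysis. Suppose some $n$-concrete $\I1$-structure on pointed types exists, given by $\obj^+: \Type^* \to \Type$ and conformity $\hom^+$ with $(n-2)$-truncated fibres. The first step is to exploit that $(1,*)$ is a zero object: for each $(Y,y)$ let $\iota_y : (1,*) \stackrel{*}{\rightarrow} (Y,y)$ and $\tau_y : (Y,y) \stackrel{*}{\rightarrow} (1,*)$ be the unique pointed maps (both pointed hom-spaces being contractible), and set $T := \obj^+(1,*)$, $A := \obj^+(Y,y)$, $i := \hom^+(\iota_y)$, $t := \hom^+(\tau_y)$. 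Functoriality of $\hom^+$ combined with the identities $\tau_y \circ \iota_y \equiv \id_{(1,*)}$ and $f \circ \iota_y \equiv \iota_y$ (for every endomorphism $f$ of $(Y,y)$) gives $t \circ i \equiv \id_T$ and $\hom^+(f) \circ i \equiv i$, so $T$ sits inside every $A$ as a canonical retract, and $\hom^+$ factors through those self-maps of $A$ that restrict to the identity along $i$.

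Second, I would perform an iterated loop-space comparison at the identity. The $(n-2)$-truncatedness of the fibres of $\hom^+$ passes to loop spaces: fibres of $\Omega^k \hom^+$ are $(n-2-k)$-truncated, so for $k \geq n$ the induced map is an equivalence onto its image. A standard computation yields
$$\Omega^k_{\id} \hom((Y,y),(Y,y)) \simeq \bigl((Y,y) \stackrel{*}{\rightarrow} (\Omega^k_y Y, \refl_y)\bigr),$$
the pointed mapping space of $(Y,y)$ into its $k$-th loop space. Choosing $(Y,y)$ to be a sufficiently non-truncated pointed type (for instance a suitable Eilenberg--MacLane space for nontrivial group and large enough level) makes this source encode genuine higher pointed homotopy of $(Y,y)$. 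On the target side, the zero-object constraint from the first step forces the iterated loop space to lie in loops pointwise fixing $i$, a sub-type whose structure is constrained purely by $T$ and $i$ and is insensitive to the pointed structure of $(Y,y)$. The mismatch between source data depending on $(Y,y)$ and target data independent of it yields the contradiction.

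The main obstacle is the complete freedom in choosing $\obj^+$: an adversary might try to absorb the obstruction by making $\obj^+(Y,y)$ homotopically rich. The zero-object argument is the essential lever that defeats this strategy, by pinning a canonical retract inside every $\obj^+(Y,y)$ that must be preserved by every realized endomorphism. The technical crux is then to convert this structural constraint into an explicit mismatch of iterated loop spaces for an appropriate test pointed type, independently of the internal details of $\obj^+$.
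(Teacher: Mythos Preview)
Your first step is correct and neatly stated: from the zero-object property of $(1,*)$ you do get that $T=\obj^+(1,*)$ is a retract of every $A=\obj^+(Y,y)$, and that every realised pointed endomorphism of $A$ satisfies $\hom^+(f)\circ i\equiv i$. The trouble is in the second step, where the argument does not close.

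First, a computational slip: the formula
\[
\Omega^k_{\id}\,\hom((Y,y),(Y,y))\;\simeq\;\bigl((Y,y)\stackrel{*}{\rightarrow}(\Omega^k_yY,\refl_y)\bigr)
\]
is not right. A loop at $\id$ in the pointed mapping type is a pointed self-homotopy of $\id_Y$: by function extensionality it is an $H:\prod_{x:Y}(x\equiv x)$ together with $H(y)\equiv\refl$, not a map $Y\to(y\equiv y)$. The adjunction $\Omega\mathrm{Map}_*(X,Z)\simeq\mathrm{Map}_*(X,\Omega Z)$ is taken at the \emph{constant} map, not at the identity. This is not fatal to the strategy, but the replacement type is less convenient than you suggest.

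The real gap is the ``mismatch'' claim. You assert that on the target side the iterated loop space ``is constrained purely by $T$ and $i$ and is insensitive to the pointed structure of $(Y,y)$''. But $A=\obj^+(Y,y)$ and the embedding $i:T\to A$ both depend on $(Y,y)$, and nothing prevents an adversary from choosing $\obj^+$ so that $A$ carries exactly as much loop-space structure as is needed. Concretely, one could take $A\simeq T\times B$ with $B$ arbitrary; the retract constraint is satisfied and the subtype $\{f:A\to A\mid f\circ i\equiv i\}$ still has $\Omega^k$ at the identity as large as $\Omega^k_{\id}(B\to B)$. So the target is not bounded independently of $(Y,y)$, and no contradiction has been produced. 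You correctly identify this freedom as ``the main obstacle'', but the zero-object retract does not, by itself, defeat it.

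The paper's argument is much shorter and avoids this difficulty by working on the other side. It uses only that $(1,*)$ is initial: $\hom((1,*),(Y,y))$ is contractible. The key additional step, which your proposal lacks, is a global truncation bound: if every $\obj^+(X,x)$ were $k$-truncated for some fixed $k$, then every $\hom'$ would be $k$-truncated, and since $\hom^+$ has $(n-2)$-truncated fibres, every pointed hom-type would be bounded in truncation level, which is false. Hence some $\obj^+(Y,y)$ is not $k$-truncated for any $k$, and then $\hom^+((1,*),(Y,y))$ is a map from a contractible type into a highly non-truncated codomain, which cannot have $(n-2)$-truncated fibres. The point is that the contradiction comes from comparing the \emph{domain} (which is forced to be contractible) against the \emph{codomain} (which is forced to be untruncated), rather than from trying to bound the codomain directly.
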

\begin{proof}
Suppose that the category of pointed types can be described as an
$n$-concrete category.

It is not possible for $\obj^+(X,x)$ to be $k$-truncated for every
pointed type $(X,x)$. Indeed, if it were, then $\hom'((X,x),(Y,y))$
would be $k$-truncated for every pair of pointed types, and then, as
$\hom^+((X,x),(Y,y))$ is an $n$-truncated map, $\hom((X,x),(Y,y))$
would be $(n+k)$-truncated, which is certainly not true for all pairs
of pointed spaces!

But then, the argument of the special case above goes through: the map
$\hom^+((1,*),(Y,Y))$ goes from a contractible type to a type which is
not in general $k$-truncated for any $k$, and hence cannot be an
$n$-truncated map in general.
\end{proof}

Plainly enough, the same difficulties may be expected to apply to most
other categories of structured types.

There are certain compromises that can be made. For example, the
category of pointed $n$-truncated types will certainly form an
$(n+1)$-concrete category. That is certainly less than the homotopy
theorist would wish for, but may be of considerable utility to the
algebraist.

Another trick is to truncate the defining equation of a pointed map,
defining instead
$$\hom((X,x),(Y,y)) = \Sigma((X\rightarrow Y), \lambda f\rightarrow
\tau_i(f(x)\equiv y)),$$
where $\tau_i$ is the $i$-truncation operator. In the case $i=-1$,
this may be interpreted as describing the category of pointed types
and maps which preserve only the connected component of the basepoint.

It is hard to escape the conclusion that this highlights an essential
deficiency of type theory in dealing with pointed types: it is hard to
see any way of dealing with them without explicitly having to handle
coherence at all levels.

\begin{remark}
\label{rmk:arrow-cat}
This analysis shows that we also can't hope to form the arrow category
of $\Type$ as a concrete category: the category whose morphisms are
pairs of types $X$, $Y$ equipped with a map $X\rightarrow Y$. Indeed,
were this to be possible, we could obtain a category of pointed types
by imposing the restriction that $X$ be contractible (which is a
proposition).
\end{remark}

\section{Spans of types: an open problem}

The \emph{category of spans}, and its variants, can be expected to be
of some importance; some of their uses in homotopy theory are
described in the author's PhD thesis\cite{JDC-PhD}.

We aim to describe a category with $\obj=\Type$, and where $\hom(X,Y)$
is the $\Sigma$-type of \emph{spans}: pairs consisting of a type $U$
and morphisms $f:U\rightarrow X$ and $g:U\rightarrow Y$. We write such
things as $(f;U;g)$; and draw them where possible as roof-shaped
diagrams:
\begin{displaymath}
\xymatrix{&U\ar[dl]\ar[dr]&\\
X&&Y.}
\end{displaymath}
The identity span on a type $X$ consists entirely of identities on
$X$: it is $(\id_X;X;\id_X)$. Composition is defined by pullbacks: the
composite of spans \hbox{$X\leftarrow U\rightarrow Y$} and
\hbox{$Y\leftarrow V\rightarrow Z$} is
\begin{displaymath}
\xymatrix{&&W\pb{270}\ar[dl]\ar[dr]&&\\
&U\ar[dl]\ar[dr]&&V\ar[dl]\ar[dr]&\\
X&&Y&&Z.}
\end{displaymath}
In our symbolic notation, the composite of a span $(f;U;g)$ from $X$
to $Y$ with a span $(h;V;k)$ from $Y$ to $Z$ is
$$(h;V;k)\circ(f;U;g) = (f\pi_1;U\timeso{Y}V;k\pi_2).$$

This definition inspires an appropriate choice of $\obj^+$: we might
take $\obj^+X$ to be the type $\Type_{/X}$ of types over $X$: that is,
types equipped with a map to $X$. We could then define $\hom^+$ by a
``pull-push'' construction:
$$\hom^+(f;U;g)(A) = g_*f^*(A).$$

Here $f^* : \Type_{/X}\rightarrow\Type_{/U}$ denotes the pullback
along $f$: it replaces a set $\alpha:A\rightarrow X$ over $X$ with the
set $f^*\alpha : Z\timeso{X}U$ over $U$. Also, $g_* :
\Type_{/U}\rightarrow\Type_{/Y}$ denotes the pushforward along $g$: it
replaces a set $\beta:B\rightarrow U$ with the set
$g\beta:B\rightarrow Y$.

Naturally one could restrict various parts of the structure: for
example, restricting the objects only to certain families of types;
restricting the central objects in the spans, or restricting the class
of morphisms which are permitted in the spans. We might call any such
structure \emph{a category of spans}, but in the discussion below we
will continue to assume there are no such restrictions for simplicity.

It is of course reasonable to ask whether this structure, as
described, does indeed produce any $n$-concrete $\I1$-categories of
spans (for any $n$).

We can give a sense of the nature of this question by looking between
two simple examples of pairs of objects.

Firstly, we consider morphisms from $\emptyset$ to $1$. In this case
the type of spans is contractible: given a diagram $\emptyset
\leftarrow U\rightarrow 1$, the $U$ must be empty (and there is a
contractible type of empty types) and the maps are then chosen from a
contractible type of possibilities.

The type $\Type_{/\emptyset}$ is contractible, and the type
$\Type_{/1}$ is equivalent to $\Type$. Thus the type
$\hom'(\emptyset,1)$ is $(1\rightarrow\Type)\isom\Type$.

The map $\hom^+$ is, under these equivalences, the map
$1\rightarrow\Type$ picking out the empty type $\empty$. This map can
certainly be seen to be $(-1)$-truncated: emptiness is a proposition.

Secondly, however, we consider morphisms from $1$ to $1$. In this case
the type of spans is equivalent to $\Type$: all we do is freely choose
the intervening object $U$ in a diagram $1\leftarrow U\rightarrow 1$,
and then we have a contractible type of choices for the maps.

The map $\hom^+$ is then the map
$$\Type\longrightarrow(\Type\rightarrow\Type)$$
sending $U$ to the map $(U\times-)$.

Now, suppose we investigate what happens if we attempt to prove that
this map is $(-1)$-truncated. Suppose we have a map
$F:\Type\rightarrow\Type$; is its homotopy fibre $n$-truncated?

That is, given two pairs $(U,\alpha)$ and $(V,\beta)$, where
$U,V:\Type$, $\alpha : F \equiv (U\times-)$ and $\beta : F \equiv
(V\times-)$, what can we say about the type $(U,\alpha)
\equiv_{\hfibre(F)} (V,\beta)$?

To start with, we have
$$U\equiv U\times 1\equiv F(1)\equiv V\times 1\equiv V,$$
using $\alpha$ and $\beta$ respectively.

So, using path induction, we may as well suppose that $U=V$ and simply
ask about $\alpha \equiv_{F\equiv(U\times-)} \beta$; or, better yet,
discuss the type $(U\times-) \equiv (U\times-)$, which contains the
element $\alpha^{-1}\cdot\beta$.

This is not going to be $n$-truncated in general for any $n$: if $U$
has interesting self-equivalences, they will extend to $(U\times
-)$. For example, if $U$ is the boolean type $1\sqcup 1$, then
$\alpha^{-1}\cdot\beta$ may well exchange the summands.

However, even if $U$ is contractible, it is not clear what we can say:
while the author does not believe it is possible to write down any
element of the type
$$(V:\Type)\rightarrow V\equiv_{\Type} V$$
except $\lambda V\rightarrow\refl$, he has been unable to show that
this type is contractible, and hence it is unclear, at least with the
standard axioms of homotopy type theory, how to show that any category
of spans is $n$-concrete.

\section{Constructions}

In this section we list a few general constructions on concrete
$\I1$-categories.

\subsection{Increasing the concreteness level}

As one might expect, an $n$-concrete $\I1$-category can be viewed as
an $(n+1)$-concrete $\I1$-category. In general, the conformity axiom
for an $n$-concrete $\I1$-category trivially implies the conformity
axiom for an $(n+1)$-concrete $\I1$-category, and it also provides the
extra structure in degree $n$.

\subsection{Disjoint unions}

For any $n\geq 1$, the disjoint union $\cC\sqcup\cD$ of two
$n$-concrete $\I1$-categories $\cC$ and $\cD$ has the structure of an
$n$-concrete $\I1$-category.

Indeed, we take $\obj = \obj_\cC\sqcup\obj_\cD$, and we take $\obj^+$
to be defined as $\obj^+_\cC$ on $\obj_\cC$ and as $\obj^+_\cD$ on
$\obj_\cD$.

All the other structure is defined as it is in $\cC$ or $\cD$ as
appropriate (there is nothing to define whenever objects from both
$\cC$ and $\cD$ are involved).

As a result, the category $*\sqcup*$ \emph{is} a $1$-concrete
$\I1$-category.

\subsection{Products}

Products of $n$-concrete $\I1$-categories are $n$-concrete
$\I1$-categories, for $n\geq 1$. We demonstrate this explicitly for
$n=1$:
\begin{thm}
Let $\cC$ and $\cD$ be $1$-concrete $\I1$-categories. Then
$\cC\times\cD$ is also a $1$-concrete $\I1$-category.
\end{thm}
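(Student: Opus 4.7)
The plan is to define all the data of a $1$-concrete $\I{1}$-category on $\cC \times \cD$ componentwise, but with a slightly non-obvious choice for the object realisation: rather than the product of $\obj^+_\cC$ and $\obj^+_\cD$, I take the coproduct. Explicitly, set $\obj := \obj_\cC \times \obj_\cD$ and $\obj^+(x_\cC, x_\cD) := \obj^+_\cC(x_\cC) + \obj^+_\cD(x_\cD)$, together with $\hom((x_\cC, x_\cD), (y_\cC, y_\cD)) := \hom_\cC(x_\cC, y_\cC) \times \hom_\cD(x_\cD, y_\cD)$, and define $\hom^+(f_\cC, f_\cD)$ to be the map out of the coproduct that is $\inl \circ \hom^+_\cC(f_\cC)$ on the left summand and $\inr \circ \hom^+_\cD(f_\cD)$ on the right. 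The witnesses $\ident'$ and $\cmp'$ come straight from those of $\cC$ and $\cD$, since summand-preserving maps of this form are closed under composition and contain the identity.

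The substantive part is the conformity: showing that $\hom^+$ has $1$-truncated homotopy fibres. I would take any $F : \obj^+(x_\cC, x_\cD) \to \obj^+(y_\cC, y_\cD)$, and observe that by function extensionality and the universal property of the coproduct, the path $\hom^+(f_\cC, f_\cD) \equiv F$ is equivalent to the conjunction of $\inl \circ \hom^+_\cC(f_\cC) \equiv F \circ \inl$ and $\inr \circ \hom^+_\cD(f_\cD) \equiv F \circ \inr$. Because these two conditions depend on disjoint variables, the homotopy fibre at $F$ decomposes as a product of two independent sub-fibres, one on each side.

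For the $\cC$-side, the relevant map $\hom_\cC(x_\cC, y_\cC) \to (\obj^+_\cC(x_\cC) \to \obj^+(y_\cC, y_\cD))$ factors as $\hom^+_\cC$ followed by postcomposition with $\inl$. The first factor is $1$-truncated by conformity in $\cC$. For the second, note that $\inl$ itself has propositional fibres (contractible on the left summand, empty on the right) and so is $(-1)$-truncated; and postcomposition with an $n$-truncated map is again $n$-truncated, because its fibre at $h$ is a $\Pi$-type of the fibres of the original map. Hence the second factor is $(-1)$-truncated, and a composite of a $1$-truncated after a $(-1)$-truncated map is $1$-truncated. So the $\cC$-side fibre is $1$-truncated; the $\cD$-side is symmetric, and products preserve $1$-truncatedness, which completes the conformity check.

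The main obstacle I expect is noticing that the obvious choice $\obj^+(x_\cC, x_\cD) := \obj^+_\cC(x_\cC) \times \obj^+_\cD(x_\cD)$ does not work: if, say, $\obj^+_\cD(x_\cD)$ is empty, the natural product $\hom^+$ has a fibre essentially equal to $\hom_\cC(x_\cC, y_\cC)$, which need only be as truncated as the fibres of $\hom^+_\cC$ allow, not $1$-truncated itself. Switching to the coproduct tags the two components apart via $\inl$ and $\inr$ and sidesteps this pathology, which is the reason for the slightly surprising choice of $\obj^+$.
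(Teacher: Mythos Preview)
Your proof is correct and follows essentially the same route as the paper: the coproduct realisation $\obj^+(x,y) = \obj^+_\cC(x) \sqcup \obj^+_\cD(y)$, the summand-wise $\hom^+$, and conformity via factoring through the coproduct structure. The paper phrases the last step as writing $\hom^+$ as the composite of $\hom^+_\cC \times \hom^+_\cD$ with the map $\sqcup : (A \to C)\times(B \to D) \to (A\sqcup B \to C\sqcup D)$ and leaves the truncatedness of both factors as an exercise, whereas you unpack the fibre decomposition explicitly; your closing paragraph explaining why the naive product realisation fails is a useful addition not present in the paper.
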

\begin{proof}
We take $\obj = \obj_\cC\times\obj_\cD$, and $\obj^+(x,y) =
\obj^+_\cC(x)\sqcup\obj^+_\cD(y)$.

Naturally, we define $\hom((x_1,y_1),(x_2,y_2)) =
\hom_\cC(x_1,x_2)\times\hom_\cD(y_1,y_2)$.

There is then an obvious candidate for the map $\hom^+$, which has
type
$$\hom_\cC(x_1,x_2)\times\hom_\cD(y_1,y_2) \longrightarrow
\obj^+_\cC(x_1)\sqcup\obj^+_\cD(y_1) \longrightarrow
\obj^+_\cC(x_2)\sqcup\obj^+_\cD(y_2),$$
namely to define $\hom^+(f,g) = \hom^+_\cC(f)\sqcup\hom^+_\cD(g).$

In this setup, the existence of suitable $\cmp'$ and $\ident'$ is
easy; the big problem is the conformity. This follows from the fact
that $\hom^+(f,g)$ is the composite of two maps; firstly a map which could reasonably be called $\sqcup$, from
$$(\obj^+_\cC(x_1)\rightarrow \obj^+_\cC(x_2)) \times
(\obj^+_\cD(y_1)\rightarrow \obj^+_\cD(y_2))$$
to
$$\left((\obj^+_\cC(x_1)\sqcup\obj^+_\cD(y_1)) \rightarrow
(\obj^+_\cC(x_2)\sqcup\obj^+_\cD(y_2))\right)$$
and $\hom^+_\cC(f)\times\hom^+_\cD(g)$. It is not a difficult exercise
to show that both these maps are $(-1)$-truncated.
\end{proof}

\section{Equivalences and univalence}
\label{s:univ}

It is a normal demand of category theory to be able to define
equivalences. It is particularly important in this setting: Ahrens,
Kapulkin and Shulman\cite{AhKaSh} discuss the utility of imposing a
\emph{univalence axiom}, which states that, between any two objects
$x$ and $y$, the natural map from the type of paths between $x$ and
$y$ to the type of equivalences between them is an equivalence.

We proceed to define equivalences in $n$-concrete $\I1$-categories in
the manner that one might expect: we use the machinery of equivalences
of types, together with some extra data to check that the given
structure in degrees up to $n$ agrees with that machinery.

Accordingly, we assume given a type $\isequiv(f)$ dependent upon types
$X$ and $Y$ and a function $f:X\rightarrow Y$, which expresses that
$f$ is an equivalence and which is a proposition for all $f$. Several
models are described in \cite{HoTT-book}*{Theorems 4.2.13, 4.3.2,
  4.4.4}. Using this we will define types $\isequiv_n(f)$ for $f$ a
morphism $f:\hom(x,y)$ in an $n$-concrete $\I1$-category, for
$n=0,1,2$.

\subsection{The $0$-concrete case}

The type $\isequiv_0(f)$ is defined simply to be
$\isequiv(\hom^+(f))$. This, of course, is a proposition.

\subsection{The $1$-concrete case}

We define the type $\isequiv_1(f)$ to be the type of pairs consisting
of:
\begin{itemize}
\item An element of $\isequiv(\hom^+(f))$; and
\item An element of the homotopy fibre of $\hom^+$ over the inverse
  $\hom^+(f)^{-1}$ thus described.
\end{itemize}
In other words, a map $f$ in a subcategory $\cC$ of $\Type$ is
invertible if it's invertible in $\Type$, and its inverse is also
contained in $\cC$.

This, again, is a proposition: it's a $\Sigma$-type whose base and
fibre are both propositions.

If we wish to choose the model for $\isequiv$ consisting of
bi-invertible morphisms, we can simplify this description: it consists
of morphisms $g, g':\hom(y,x)$ such that $\cmp(g,f)\equiv\id(x)$ and
$\cmp(f,g')\equiv\id(y)$.

\subsection{The $2$-concrete case}

We define the type $\isequiv_2(f)$ to be the type whose elements
consist of:
\begin{itemize}
\item An element of $\isequiv(\hom^+(f))$;
\item An element $(g,e)$ of the homotopy fibre of $\hom^+$ over the
  inverse $\hom^+(f)^{-1}$ thus described;
\item Paths $\cmp(f,g)\equiv\id$ and $\cmp(g,f)\equiv\id$.
\end{itemize}

Again, this is a proposition: it's fibred over the proposition
$\isequiv(\hom^+(f))$, and the standard proof that any two inverses
are equal proves that any appropriate elements of the homotopy fibre
of $\hom^+$ are equal.

As before, this admits a simplication if we use bi-invertibility as
our definition of equivalence: again we just need $g,g':\hom(y,x)$
with $\cmp(g,f)\equiv\id(x)$ and $\cmp(f,g')\equiv\id(y)$.

\section{Functors}

If one is serious about doing category theory with $n$-concrete
$\I1$-categories, then one must certainly wish to define functors
between them. A direct definition, sending objects to objects, and
morphisms to morphisms, and so on, appears to have all the
deficiencies that a direct definition of categories would have: the
need for an infinite sequence of coherence data.

However, there is a standard trick for representing functors using
only a well-developed theory of categories, using the notion of a
\emph{cocartesian fibration}. This approach is developed in the
Joyal-Lurie theory of $\I1$-categories in \cite{Lur}*{Section 2.4 and
  thereafter}.

Suppose, therefore, we have an $n$-concrete $\I1$-category on object
set $A\sqcup B$, and no homomorphisms from anything in $B$ to anything
in $A$:
$$(a:A)(b:B)\rightarrow\neg\hom(\inr(b),\inl(a)).$$

Given that, this category can be regarded as being over the category
with two objects and one non-identity arrow. We call it an
\emph{arrowlike category}.

A morphism $f:\hom(\inl(a),\inr(b))$ is \emph{cocartesian} if, for all
$b':B$, the map $\lambda g\rightarrow\cmp(g,f)$ induces an equivalence
$$\hom(\inr(b),\inr(b'))\longrightarrow\hom(\inl(a),\inr(b')).$$

We say that an arrowlike category as described above is a
\emph{cocartesian fibration} if every object in $A$ has a cocartesian
morphism out of it.

To start with, the concept of a cocartesian morphism is well-behaved:
\begin{thm}
The type of proofs that a morphism is cocartesian is a proposition.
\end{thm}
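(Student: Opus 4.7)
The plan is to unfold the definition and observe that being cocartesian is manifestly a dependent product of propositions, hence a proposition.

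Concretely, given $f:\hom(\inl(a),\inr(b))$, the type asserting that $f$ is cocartesian is
$$\prod_{b':B}\isequiv\bigl(\lambda g.\cmp(g,f)\bigr),$$
where for each $b':B$ the function inside $\isequiv$ has type $\hom(\inr(b),\inr(b'))\to\hom(\inl(a),\inr(b'))$. So I would first rewrite the informal definition in this form, to make the logical structure visible.

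Next I would invoke the standing assumption from Section~\ref{s:univ} that $\isequiv(h)$ is a proposition for every function $h$ (this is one of the standard properties of the chosen model of $\isequiv$, as referenced in the paper via \cite{HoTT-book}*{Theorems 4.2.13, 4.3.2, 4.4.4}). Thus for each individual $b':B$ the fibre of the $\Pi$-type is a proposition.

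Finally I would close the argument by the standard fact that a dependent product of propositions is itself a proposition: if $P:X\to\Type$ with each $P(x)$ a proposition, then $\prod_{x:X} P(x)$ is a proposition, since given two sections $s,t$ and any $x$ we have $s(x)\equiv t(x)$, and function extensionality then yields $s\equiv t$. No obstacle is really expected here: the whole point of phrasing ``$f$ is cocartesian'' in terms of $\isequiv$ rather than, say, some choice of inverse, is precisely to get propositionality for free. The only thing worth remarking on is the implicit use of function extensionality in the last step, which is available throughout the paper.
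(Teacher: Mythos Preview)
Your proposal is correct and matches the paper's own proof essentially verbatim: the paper simply notes that the type is a dependent function type valued in $\isequiv$-types, all of which are propositions. Your version is slightly more explicit (spelling out the $\Pi$-type and the use of function extensionality), but the approach is identical.
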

\begin{proof}
It's a dependent function type, valued in types of equivalences, all
of which are propositions.
\end{proof}

In fact, more than this is true, providing we use a univalence axiom
(as discussed in Section \ref{s:univ} above):
\begin{thm}
In a univalent $n$-concrete $\I1$-category, the type of cocartesian
morphisms out of any object is a proposition.
\end{thm}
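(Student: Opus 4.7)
The plan is to adapt the classical $1$-categorical argument that any two cocartesian lifts are connected by a unique isomorphism, and then to invoke the univalence axiom to promote this isomorphism to an actual path. Since the previous theorem tells us that being cocartesian is a proposition, it suffices to equip the $\Sigma$-type of pairs $(b,f)$ with $f$ cocartesian out of $a$ with the structure of a proposition; equivalently, to build a path between any two such pairs $(b_1, f_1)$ and $(b_2, f_2)$.

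First I would apply the cocartesian property of $f_1$ at $b_2$ to obtain a unique $g : \hom(\inr(b_1), \inr(b_2))$ with $\cmp(g, f_1) \equiv f_2$; symmetrically, the cocartesian property of $f_2$ at $b_1$ yields a unique $h : \hom(\inr(b_2), \inr(b_1))$ with $\cmp(h, f_2) \equiv f_1$. Then $\cmp(\cmp(h, g), f_1) \equiv \cmp(h, f_2) \equiv f_1 \equiv \cmp(\ident(\inr(b_1)), f_1)$, and since $\lambda k. \cmp(k, f_1)$ is an equivalence on $\hom(\inr(b_1), \inr(b_1))$ (taking $b' = b_1$ in the cocartesian definition), we conclude $\cmp(h,g) \equiv \ident(\inr(b_1))$. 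The mirror argument gives $\cmp(g,h) \equiv \ident(\inr(b_2))$, so $g$ is an equivalence in $\cC$ between $\inr(b_1)$ and $\inr(b_2)$ in the sense of Section~\ref{s:univ}.

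Next, I would invoke the univalence axiom for $\cC$ to convert this categorical equivalence into a path $\inr(b_1) \equiv \inr(b_2)$ in the object type $A \sqcup B$. Standard coproduct computations (using that $\inr$ is an embedding of $B$ into $A \sqcup B$) descend this to a path $p : b_1 \equiv b_2$ in $B$, and one then transports $f_1$ along $p$ to obtain an element of $\hom(\inl(a), \inr(b_2))$; the cocartesian certificate carries along automatically because it is propositional.

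The main obstacle is the final verification: one must check that the transported $f_1$ actually equals $f_2$, which amounts to unfolding the natural map from paths to equivalences whose invertibility is the content of univalence, and checking that it sends $p$ back to $g$ (so that transport along $p$ reproduces $\cmp(g, f_1) \equiv f_2$). This is the expected compatibility of categorical univalence with composition, but it needs to be applied carefully, especially at the lower concreteness levels where unit and associativity are available only propositionally rather than definitionally.
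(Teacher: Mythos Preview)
Your proposal is correct and follows essentially the same approach as the paper: both construct mutually inverse morphisms between the codomains via the cocartesian universal property, deduce that they form an equivalence in the sense of Section~\ref{s:univ}, and then invoke univalence to obtain the required path between the two cocartesian lifts. If anything, you are more explicit than the paper about the final step (descending along $\inr$ and verifying that transport of $f_1$ along the resulting path recovers $f_2$), which the paper handles in a single sentence.
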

\begin{proof}
In fact (following the pattern so far) we prove this in detail only
for $n=0,1,2$.

First we show that, given any two cocartesian morphisms $f:\hom(x,y)$
and $g:\hom(x,z)$, there is an equivalence between $y$ and $z$.

The cocartesian nature of $f$ gives an element $i:\hom(y,z)$ such that
$\cmp(i,f)\equiv g$. Similarly, there is an element $j:\hom(z,y)$ such
that $\cmp(j,g)\equiv f$.

Now,
$$\cmp(\cmp(i,j),g)\equiv \cmp(i,\cmp(j,g))\equiv \cmp(i,f) \equiv
g,$$ but since $\cmp(-,g)$ is an equivalence this means that
$\cmp(i,j)\equiv\id(z)$.

Similarly $\cmp(\cmp(j,i),f)\equiv f$ and hence
$\cmp(j,i)\equiv\id(y)$.

By the discussion in Section \ref{s:univ}, this gives us our
equivalence. In general, for an $n$-concrete $\I1$-category for $n>2$,
we should have to work more.

Now the appropriate univalence axiom gives that $y\equiv z$ and
$\hom^+(i)$ maps to $\id(\obj^+(y))$, and hence that $f\equiv g$.
\end{proof}

As a corollary, we get that the notion of cocartesian fibration is
well-behaved:
\begin{thm}
The type of proofs that an arrowlike category is a cocartesian
fibration is a proposition.
\end{thm}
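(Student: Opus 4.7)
The plan is to unfold the definition of cocartesian fibration and observe that it is a dependent function type whose codomain is already known to be a proposition. Specifically, the statement that an arrowlike category is a cocartesian fibration unfolds to
$$(a : A) \rightarrow \Sigma(b:B,\Sigma(f:\hom(\inl(a),\inr(b)),\text{``}f\text{ is cocartesian''})),$$
i.e.\ a dependent function assigning to each $a:A$ a cocartesian morphism out of $a$.

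The previous theorem tells us precisely that, in a univalent $n$-concrete $\I1$-category, the type of cocartesian morphisms out of any fixed object is a proposition. So for each $a:A$, the codomain of the function above is a proposition.

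The only remaining step is to recall the standard fact that a dependent function type valued in propositions is itself a proposition. Since this is a routine application of function extensionality, there is essentially no obstacle: the real work has already been done in proving that cocartesian morphisms out of a given object are essentially unique. The proof is therefore a one-line invocation of the previous theorem combined with closure of propositions under dependent products.
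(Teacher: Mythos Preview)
Your argument is correct and matches the paper's proof exactly: the paper also observes that the type is a dependent function type valued in propositions (by the previous theorem), hence a proposition. Your write-up is merely a slightly more explicit version of the same one-line argument.
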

\begin{proof}
This type is an dependent function type, and by the previous theorem
it is valued in propositions, and hence a proposition itself.
\end{proof}

\begin{remark}
While we can \emph{define} functors, we have no chance of forming
concrete functor categories. Indeed, we can't even form the arrow category
$\Fun(\Delta^1,\Type)$, as mentioned above in Remark \ref{rmk:arrow-cat}.
\end{remark}

\section{Prospects for further work}

\subsection{Further constructions}

Clearly the methods described above do not constitute a full
development of category theory. One may reasonably ask about
$n$-concrete versions of other popular constructions: with what
truncation hypotheses can they be defined?

\subsection{Uniform definitions}

The definitions above depend on an understanding of notions of
$(n,1)$-category; we have restricted detailed discussion to cases of
small $n$ where appropriate definitions are well-known.

Nevertheless, families of general definitions exist\cite{Lein}, and it
would perhaps be worthwhile to see if any of them can painlessly be
implemented in homotopy type theory.

The aim would be a well-defined family of definitions of $n$-concrete
$\I1$-category, valid for all $n:\bN$.

\subsection{Concrete categories in exotic homotopy type theories}

At present only one homotopy type theory has received extensive study:
the one modelled by the homotopy theory of spaces.

It was once the case that the only homotopy theory that was studied
was the homotopy theory of spaces. However, with the help of the
language of model categories \cite{ModCats}, it was progressively
realised that this is just one in a vast family of homotopy theories,
many of them helpful even in furthering understanding of spaces
themselves. A complete list of examples would be longer than this
paper; one very modest example is the theory of pointed spaces, where
preservation of the basepoint is forced.

The author suspects that exotic homotopy type theories, corresponding
to other homotopy theories, will soon receive heavy attention. This
will naturally augment the collection of concrete categories: given a
type theory containing universes of pointed types, the difficulties of
subsection \ref{ss:pointed-types} would vanish altogether.

\begin{bibdiv}
\begin{biblist}

\bib{Agda}{article}{
  title={Agda},
  eprint={http://wiki.portal.chalmers.se/agda/pmwiki.php}}

\bib{AhKaSh}{article}{
  title={Univalent categories and the Rezk completion},
  author={Ahrens, Benedikt},
  author={Kapulkin, Chris},
  author={Shulman, Michael},
  date={2013}, pages={27},
  eprint={http://arxiv.org/abs/1303.0584}}

\bib{Borc}{book}{
   author={Borceux, Francis},
   title={Handbook of categorical algebra. 1},
   series={Encyclopedia of Mathematics and its Applications},
   volume={50},
   note={Basic category theory},
   publisher={Cambridge University Press},
   place={Cambridge},
   date={1994},
   pages={xvi+345},
   isbn={0-521-44178-1},
   review={\MR{1291599 (96g:18001a)}}}

\bib{Brun-HoTT}{article}{
  author={Brunerie, Guillaume},
  title={Development of homotopy type theory in Agda},
  eprint={https://github.com/HoTT/HoTT-Agda}}

\bib{ModCats}{article}{
   author={Dwyer, W. G.},
   author={Spali{\'n}ski, J.},
   title={Homotopy theories and model categories},
   conference={
      title={Handbook of algebraic topology}},
   book={
      publisher={North-Holland},
      place={Amsterdam}},
   date={1995},
   pages={73--126},
   review={\MR{1361887 (96h:55014)}},
   doi={10.1016/B978-044481779-2/50003-1}}

\bib{HoTT-book}{article}{
  title={Homotopy Type Theory: Univalent Foundations of Mathematics},
  institution={Institute of Advanced Study},
  eprint={http://homotopytypetheory.org/book/}}

\bib{JDC-PhD}{thesis}{
   author={Cranch, James},
   title={Algebraic theories and $(\infty,1)$-categories},
   date={2010},
   pages={113},
   eprint={arXiv:1011.3243v1 [math.AT]},
   type={PhD thesis},
   institution={University of Sheffield}}

\bib{JDC-HoTT}{article}{
  author={Cranch, James},
  title={Homotopy type theory in Agda},
  eprint={https://github.com/jcranch/HoTT-Agda}}

\bib{Joy}{article}{
   author={Joyal, Andr{\'e}},
   title={Quasi-categories and Kan complexes},
   note={Special volume celebrating the 70th birthday of Professor Max
     Kelly},
   journal={J. Pure Appl. Algebra},
   volume={175},
   date={2002},
   number={1-3},
   pages={207--222},
   issn={0022-4049},
   review={\MR{1935979 (2003h:55026)}},
   doi={10.1016/S0022-4049(02)00135-4}}

\bib{Lein}{collection}{
   author={Leinster, Tom},
   title={Higher operads, higher categories},
   series={London Mathematical Society Lecture Note Series},
   volume={298},
   publisher={Cambridge University Press},
   place={Cambridge},
   date={2004},
   pages={xiv+433},
   isbn={0-521-53215-9},
   review={\MR{2094071 (2005h:18030)}},
   doi={10.1017/CBO9780511525896}}

\bib{Lur}{book}{
   author={Lurie, Jacob},
   title={Higher topos theory},
   series={Annals of Mathematics Studies},
   volume={170},
   publisher={Princeton University Press},
   place={Princeton, NJ},
   date={2009},
   pages={xviii+925},
   isbn={978-0-691-14049-0},
   isbn={0-691-14049-9},
   review={\MR{2522659 (2010j:18001)}},
   eprint={http://www.math.harvard.edu/~lurie/papers/croppedtopoi.pdf}}

\bib{MacL}{book}{
   author={Mac Lane, Saunders},
   title={Categories for the working mathematician},
   series={Graduate Texts in Mathematics},
   volume={5},
   edition={2},
   publisher={Springer-Verlag},
   place={New York},
   date={1998},
   pages={xii+314},
   isbn={0-387-98403-8},
   review={\MR{1712872 (2001j:18001)}}}

\end{biblist}
\end{bibdiv}

\end{document}